\DeclarePairedDelimiter{\ceil}{\lceil}{\rceil}
\theoremstyle{plain}
\newtheorem{thm}{Theorem}[section]
\newtheorem{lem}{Lemma}[section]
\newtheorem{cor}{Corollary}[section]
\newtheorem{thma}{Theorem}
\theoremstyle{proof}
\numberwithin{equation}{section}
\begin{document} 
\title[Sums of integral squares]{Sums of integral squares in complex bi-quadratic fields and in CM fields}
\author{Srijonee Shabnam Chaudhury}
\address{ Srijonee Shabnam Chaudhury
@Harish-Chandra Research Institute,HBNI,
Chhatnag Road, Jhunsi,  Allahabad 211 019, India.}
\email{srijoneeshabnam@hri.res.in}
\keywords{Complex biquadratic field, Sums of squares}
\subjclass[2010] {11E25, 11R16, 11R33}
\maketitle
\begin{abstract}
Let $K$ be a complex bi-quadratic field with ring of integers $\mathcal{O}_{K}$. For $K =  \mathbb{Q}(\sqrt{-m}$, $\sqrt{n}$), where $ m \equiv 3 \pmod 4 $ and $ n \equiv 1 \pmod 4$, we prove that every algebraic integer can be written as sum of integral squares. Using this, we prove that for any complex bi-quadratic field $K$, every element of $4\mathcal{O}_K$ can be written as sum of five integral squares. In addition, we show that the Pythagoras number of ring of integers of any CM field is at most five. Moreover, we give two classes of complex bi-quadratic fields for which $p(\mathcal{O}_{K})= 3$ and $p(4\mathcal{O}_{K})=3 $ respectively. Here, $p(\mathcal{O}_{K})$ is the Pythagoras number of ring of integers of $K$ and $p(4\mathcal{O}_{K})$ is the smallest positive integer $t$ such that every element of $4\mathcal{O}_{K}$ can be written as sum of $t$ integral squares.
\end{abstract}
\maketitle
\section{Introduction}
The sums of integral squares in a number field is one of the fundamental object of study in number theory. In 1770, E.Waring proposed the famous 'Waring's problem' which asks whether each natural number $k$ has an associated positive integer $s$ such that every natural number is the sum of at most $s$ natural numbers to the power of $k$. This problem is a generalisation of Lagrange's four square theorem (see \cite{LA}). Siegel \cite{SI21} studied the Waring's problem over algebraic number fields and showed  in \cite{SI45} that  $\mathbb{Q}$ and $\mathbb{Q}(\sqrt{ 5} )$ are only two totally real fields in which every totally positive integer can be written as sums of integral squares. On the other hand, if $K$ is not totally real then all totally positive algebraic integers are sums of integral squares in $K$ if and only if the discriminant of $K$ is odd. Estes and Hsia \cite{EH} determined all complex quadratic fields in which algebraic integers are expressible as sums of three integer squares. This article is motivated by a recent work of Zhang and Ji  \cite{ZJ} on sums of three squares in certain complex bi-quadratic fields, which we will state later in section 3. Previously we have proved that for some  complex bi-quadratic fields every algebraic integer can be written as sum of integral squares. Now using the result, in this article  we get some generalised results for every complex bi-quadratic field and will prove that every algebraic integer multiplied by $4$ can be written as sum of integral squares. 
For proving these results, we will use some techniques and results from elementary number theory and from local class field theory (only in proof of Corollary \ref{cor2}). The most important fact which helps us to prove these results is that $-1$ can be written as sum of integral squares in complex bi-quadratic fields. This is also true for cyclotomic fields. Therefore, using the fact, similar results can also be proved for any non-totally real number fields. But for totally real cases this is not applicable.
 
\section{Notation}
Let $K$ be any complex bi-quadratic field with ring of integers $ \mathcal{O}_K$ and let $F$ be any number field.
We now define the following:
\begin{itemize}
\item[] $s\mathcal{O}_{K}$ : the set of all elements $\alpha \in K $ such that $ \alpha = s \beta $ where $\beta \in \mathcal{O}_{K}$ and $s \in \mathbb{N}$
\item[] $\mathcal{R}_K$: the set of all elements in $\mathcal{O}_K$ which are sum of squares in $\mathcal{O}_K$. 
\item[] $s(\mathcal{O}_K)$: the minimal number of squares required to represent $-1$ in $\mathcal{O}_K$.
\item[] $p(\mathcal{O}_K)$:  the smallest positive integer $t$ such that every element in $R_K$ is a sum of $t$ squares in $\mathcal{O}_K$. This is called the Pythagoras number of ring of integers of $K$.
\item[] $p(s\mathcal{O}_K)$:  the smallest positive integer $t$ such that every element in $s\mathcal{O}_K$ is a sum of $t$ squares in $\mathcal{O}_K$. We can say this the Pythagoras number of $s\mathcal{O}_{K}$ for the field $K$.
\item[] $D$ : positive, square-free integer.
\item[] $\epsilon_{D}$: the fundamental unit of real quadratic field $\mathbb{Q}(\sqrt{D})$.
\item[] $\mathcal{N}$: the norm operator on $\mathbb{Q}(\sqrt{D}) / \mathbb{Q}$
\item[] $\ceil[big]{x}$: the smallest positive integer greater than or equals to some real number $x$
\end{itemize}
\section{Main Results}
\begin{thm}\label{thm1}
 Let $F$ be any number field in which $-1$ can be written as sum of integral squares. Then
$$
p(\mathcal{O}_{F}) \leq \begin{cases}
s(\mathcal{O}_{F})+1 & \text{ if } s(\mathcal{O}_{F}) \text{ is even},\\
s(\mathcal{O}_{F})+2 & \text{ if } s(\mathcal{O}_{F}) \text{ is odd.}
\end{cases}
$$
\end{thm}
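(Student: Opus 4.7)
Let $s = s(\mathcal{O}_F)$ and fix a representation $-1 = y_1^2 + \cdots + y_s^2$ with $y_i \in \mathcal{O}_F$. The basic tool I plan to use throughout is the \emph{reservoir identity} $-\beta^2 = (y_1\beta)^2 + \cdots + (y_s\beta)^2$, valid for every $\beta \in \mathcal{O}_F$, which expresses $-\beta^2$ as a sum of exactly $s$ integral squares and lets me convert any ``negative square'' in an algebraic identity into $s$ honest integral squares.

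The workhorse will be the difference-of-squares identity $4\beta = (\beta+1)^2 - (\beta-1)^2$, specialized (after dividing by $2$) according to the residue of $\alpha$ modulo $2\mathcal{O}_F$. In the ``odd'' case $\alpha - 1 \in 2\mathcal{O}_F$, setting $\beta = (\alpha-1)/2 \in \mathcal{O}_F$ gives $\alpha = (\beta+1)^2 - \beta^2$; applying the reservoir identity to $-\beta^2$ writes $\alpha$ as a sum of $s+1$ integral squares, independently of the parity of $s$. In the ``even'' case $\alpha \in 2\mathcal{O}_F$, setting $\gamma = \alpha/2 \in \mathcal{O}_F$ gives $\alpha = (\gamma+1)^2 - (\gamma^2+1)$, and here the parity of $s$ enters decisively in how one represents $-(\gamma^2+1)$.

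When $s$ is even, I plan to pair the $y_i$'s into $s/2$ couples and apply the Brahmagupta two-square identity $(y_{2k-1}^2 + y_{2k}^2)(\gamma^2 + 1^2) = (y_{2k-1}\gamma - y_{2k})^2 + (y_{2k-1} + y_{2k}\gamma)^2$ to each pair; summing over $k$ exhibits $-(\gamma^2+1)$ as a sum of $s$ integral squares, which combines with $(\gamma+1)^2$ to give $\alpha$ as a sum of $s+1$ squares. When $s$ is odd this pairing trick is unavailable, and I will instead observe that $\alpha - 1$ is odd and still lies in $\mathcal{R}_F$ (since $-1 \in \mathcal{R}_F$), apply the odd-case argument to write $\alpha - 1$ as a sum of $s+1$ squares, and conclude that $\alpha = 1^2 + (\alpha - 1)$ is a sum of $s+2$ squares.

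The main obstacle is ensuring that every $\alpha \in \mathcal{R}_F$ genuinely falls into one of the two residue classes $0$ or $1$ modulo $2\mathcal{O}_F$, or else can be shifted into such a class without blowing up the square count. The key input here is that the Frobenius $x \mapsto x^2$ is additive modulo $2\mathcal{O}_F$, forcing $\alpha = \sum_i x_i^2 \equiv \bigl(\sum_i x_i\bigr)^2 \pmod{2\mathcal{O}_F}$; thus $\alpha - u^2 \in 2\mathcal{O}_F$ for $u = \sum_i x_i$, which reduces the problem to the even-case identity applied to $\alpha - u^2$. The delicate point will be absorbing this subtracted $u^2$ cleanly into the identity (rather than adding it as a separate summand) so that the total number of squares does not exceed $s+1$ when $s$ is even, nor $s+2$ when $s$ is odd.
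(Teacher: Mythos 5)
Your two special cases are handled correctly: for $\alpha\equiv 1\pmod{2\mathcal{O}_F}$ the decomposition $\alpha=(\beta+1)^2-\beta^2$ plus the reservoir identity gives $s+1$ squares, and for $\alpha\in 2\mathcal{O}_F$ with $s$ even your Brahmagupta pairing of the $y_i$ against $\gamma^2+1^2$ is exactly the device the paper also uses. But the proof is not complete. For a general $\alpha\in\mathcal{R}_F$ the reduction $\alpha\equiv u^2\pmod{2\mathcal{O}_F}$ (with $u=\sum_i x_i$) leaves you with $\alpha=u^2+2\gamma$, and the step you explicitly defer --- ``absorbing the subtracted $u^2$ cleanly into the identity'' --- is precisely the crux of the theorem. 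If $u^2$ is carried as a separate summand on top of your even-case decomposition of $2\gamma$, the count becomes $1+1+s=s+2$ squares for even $s$ and $s+3$ for odd $s$, both exceeding the claimed bound. Since in a number field of degree greater than $1$ the generic element of $\mathcal{R}_F$ lies in none of the classes $0,1\bmod 2\mathcal{O}_F$, the argument as written proves the theorem only on a thin set of elements.

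What is missing is a way to put an arbitrary sum of squares into the shape (one square) minus (a sum of exactly two squares), so that the negative part feeds into your pairing identity and contributes only $s$ squares when $s$ is even. The paper does this in one stroke: for any representation $\alpha=\sum_{i=1}^{t}\beta_i^2$, setting $A=\sum_i\beta_i$ and $B=\sum_{1\le i\le j\le t}\beta_i\beta_j$ one has $(A+B+1)^2-(A+B)^2-(A+1)^2=2B-A^2=\sum_i\beta_i^2=\alpha$, with no case division modulo $2$ at all. Your framework can be salvaged in the same spirit: from $\alpha=u^2+2\gamma$ take $P=\gamma+u^2+u+1$, $Q=\gamma+u^2+u$, $R=u+1$ and check $P^2-Q^2-R^2=(2\gamma+2u^2+2u+1)-(u^2+2u+1)=\alpha$; then $-(Q^2+R^2)=(y_1^2+\cdots+y_s^2)(Q^2+R^2)$ is a sum of $s$ squares for even $s$ (and $s+1$ for odd $s$), yielding the stated bounds. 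Without some such identity the proposal stops exactly where the real work begins, so as it stands it does not establish the theorem.
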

This theorem is already proved by Peters(\cite{Pet}, Satz 1, Satz 3) in 1974. But here we have given an elementary proof of the result using Moser's theorem \cite{MO} (Proved in 1970).\\
 As an immediate consequence of this result one has the following:
\begin{cor}\label{cor1}
The Pythagoras number of ring of integers of a CM field is at most five.
\end{cor}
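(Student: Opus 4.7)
The plan is to invoke Theorem~\ref{thm1} once the level $s(\mathcal{O}_F)$ has been controlled. Because a CM field $F$ is totally imaginary, $-1$ is a sum of squares in $F$, and the classical strengthening of this fact to the ring of integers (essentially due to Siegel, and recovered in Peters's work already cited) gives $s(\mathcal{O}_F) \leq 4$. In particular the hypothesis of Theorem~\ref{thm1} is met for any CM field.

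Granted $s(\mathcal{O}_F) \leq 4$, a short case split will finish the argument. If $s(\mathcal{O}_F) \in \{1, 2\}$, Theorem~\ref{thm1} yields $p(\mathcal{O}_F) \leq 3$; if $s(\mathcal{O}_F) = 3$ (odd), then $p(\mathcal{O}_F) \leq 3 + 2 = 5$; and if $s(\mathcal{O}_F) = 4$ (even), then $p(\mathcal{O}_F) \leq 4 + 1 = 5$. In every case $p(\mathcal{O}_F) \leq 5$, as claimed.

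The substantive step is therefore the bound $s(\mathcal{O}_F) \leq 4$, and this is where I expect the real work to sit. A self-contained approach would exploit the decomposition $F = F^+(\sqrt{-d})$, where $F^+$ is the maximal totally real subfield of $F$ and $d \in \mathcal{O}_{F^+}$ is totally positive. Setting $\beta = \sqrt{-d} \in \mathcal{O}_F$, the identity $-1 = \beta^2 + (d - 1)$ reduces the task to expressing $d - 1$ as a sum of at most three integer squares in $\mathcal{O}_{F^+}$. The main obstacle is that $d - 1$ need not be totally non-negative (if $d < 1$ at some real embedding of $F^+$), nor a sum of squares in $F^+$ even when it is. The standard workaround is to replace $\beta$ by $c\beta$ for $c \in \mathcal{O}_{F^+}$ chosen totally positive and large enough in norm that $c^2 d - 1$ is totally positive, giving $-1 = (c\beta)^2 + (c^2 d - 1)$; one then represents $c^2 d - 1$ as a sum of three integer squares in $F^+$ via a local-global argument, using that the $\mathfrak{p}$-adic level at every finite place of $F^+$ is at most $4$.
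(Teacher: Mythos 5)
Your top-level structure is exactly the paper's: establish $s(\mathcal{O}_F)\le 4$, then run the parity case split through Theorem~\ref{thm1} to get $p(\mathcal{O}_F)\le 5$; that part is correct. The gap is in the step you yourself identify as the substantive one, namely the bound $s(\mathcal{O}_F)\le 4$. Writing $-1=(c\beta)^2+(c^2d-1)$ and asking for $c^2d-1$ to be a sum of three integral squares in the totally real field $F^{+}$ cannot work in general. Choosing $c$ only controls the archimedean places, but by Siegel's theorem (quoted in the paper's own introduction) every totally real field other than $\mathbb{Q}$ and $\mathbb{Q}(\sqrt{5})$ contains totally positive integers that are not sums of integral squares at all: there are dyadic congruence obstructions that total positivity does not see, and your construction makes no attempt to arrange them for $c^2d-1$. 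Moreover, even when an element is locally a sum of three squares at every place, the integral Hasse principle fails for ternary forms (spinor and class-number exceptions), and over a totally real field $x^2+y^2+z^2$ is totally definite, which is the worst case for such arguments. So the ``self-contained approach'' does not close, and what remains is only the bare appeal to Peters; the work you locate as essential is not actually done.

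For comparison, the paper bounds the level by asserting that a CM field contains an imaginary quadratic subfield $F_1$ and quoting Moser's Theorem~\ref{thmM} to get $s(\mathcal{O}_{F_1})\le 4$, hence $s(\mathcal{O}_F)\le 4$. Be aware that this is itself incomplete as stated: a CM field of degree greater than $2$ need not contain an imaginary quadratic subfield (for example $\mathbb{Q}(\zeta_5)$ has $\mathbb{Q}(\sqrt{5})$ as its only quadratic subfield). The honest route to $s(\mathcal{O}_F)\le 4$ for an arbitrary totally imaginary $F$ is either the cited result of Peters/Moser or an integral local--global argument for the quaternary form $x_1^2+x_2^2+x_3^2+x_4^2$, which is isotropic at every archimedean place of such an $F$ and represents $-1$ over every completion of $\mathcal{O}_F$; if you want to keep your decomposition $F=F^{+}(\sqrt{-d})$, you must work with four auxiliary squares over $F$ in this way rather than three squares inside $F^{+}$.
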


\begin{thm}\label{thm2}
 Let $ m \equiv  3 \pmod 4 $ and $ n \equiv 1 \pmod4 $ be two distinct square-free positive integers.
Assume that $K = \mathbb{Q}(\sqrt{-m},\sqrt{n})$. Then $ {R}_{K}  =  \mathcal{O}_{K}$.
\end{thm}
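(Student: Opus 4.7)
The plan is to show that $R_K$ is a subring of $\mathcal{O}_K$ containing an integral basis, from which $R_K=\mathcal{O}_K$ will follow. Closure of $R_K$ under addition is trivial, and closure under multiplication follows from the identity $(\sum_i a_i^2)(\sum_j b_j^2)=\sum_{i,j}(a_ib_j)^2$. To promote $R_K$ to a subring it will suffice to establish $-1\in R_K$, for then $-\alpha=(-1)\alpha\in R_K$ for every $\alpha\in R_K$. I would prove $-1\in R_K$ at once from $\sqrt{-m}\in\mathcal{O}_K$, by writing
\[
-1=(\sqrt{-m})^2+(m-1)
\]
and invoking Lagrange's four-square theorem on the positive integer $m-1$.

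The heart of the argument is to place the generators of an integral basis of $\mathcal{O}_K$ inside $R_K$. The congruences $-m\equiv 1\pmod 4$ and $n\equiv 1\pmod 4$ produce the algebraic integers
\[
\omega_1=\tfrac{1+\sqrt{-m}}{2},\qquad \omega_2=\tfrac{1+\sqrt{n}}{2},\qquad \omega_3=\tfrac{1+\sqrt{-m'n'}}{2},
\]
where $m'=m/\gcd(m,n)$ and $n'=n/\gcd(m,n)$, and a direct check gives $-m'n'\equiv 1\pmod 4$ as well. My key device is the observation that every $\omega=(1+\sqrt{d})/2$ with $d\equiv 1\pmod 4$ satisfies $\omega^2-\omega+(1-d)/4=0$, which rearranges to
\[
\omega=\omega^2+\frac{1-d}{4}.
\]
When $d<0$ (the cases $d=-m$ and $d=-m'n'$), the constant $(1-d)/4$ is a positive integer, hence a sum of integer squares by Lagrange, so $\omega_1,\omega_3\in R_K$ directly. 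When $d=n>0$, I would write $(1-n)/4=(-1)\cdot(n-1)/4$ and conclude $\omega_2\in R_K$ as a sum of $R_K$-elements, using $-1\in R_K$ from the previous paragraph together with the fact that $(n-1)/4$ is a non-negative integer.

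It will then remain only to observe that $R_K$, being a subring of $\mathcal{O}_K$ containing $1,\omega_1,\omega_2,\omega_3$ along with all their products, contains an integral basis of $\mathcal{O}_K$—namely $\{1,\omega_1,\omega_2,\omega_1\omega_2\}$ when $\gcd(m,n)=1$, and one involving $\omega_3$ otherwise—so $R_K=\mathcal{O}_K$. The main obstacle I expect is precisely the clean identification of this integral basis, which genuinely splits into cases depending on $\gcd(m,n)$; but the uniform identity $\omega=\omega^2+(1-d)/4$ combined with $-1\in R_K$ makes every candidate basis element of the form $(1+\sqrt{d})/2$ automatically a sum of squares, so the case analysis reduces to a book-keeping verification that the $\omega_i$'s actually generate $\mathcal{O}_K$ as a ring.
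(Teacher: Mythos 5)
Your proposal is correct and follows essentially the same route as the paper: the identity $\omega=\omega^2+\frac{1-d}{4}$ for $\omega=\frac{1+\sqrt{d}}{2}$, $d\equiv 1\pmod 4$, combined with $-1\in R_K$ (hence $\mathbb{Z}\subseteq R_K$ and closure of $R_K$ under sums and products) and an explicit integral basis, is exactly the paper's argument, with the basis book-keeping you defer being precisely the content of the paper's Lemma \ref{lem1}. The only cosmetic difference is that you obtain $-1\in R_K$ from the elementary identity $-1=(\sqrt{-m})^2+(m-1)$ rather than by citing Moser's theorem, which is a perfectly valid (and slightly more self-contained) substitute.
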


Applying these Theorems we get our final result:
\begin{thm}\label{thm3}
Let $K$ be any complex bi-quadratic field. Every element of $4 \mathcal{O}_K $ can be written as sum of five integral squares.
\end{thm}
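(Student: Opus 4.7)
The plan is two-step: first reduce the quantitative claim ``sum of five squares'' to the qualitative claim ``$4\alpha$ lies in $\mathcal{R}_K$'' by invoking Corollary \ref{cor1}, and then establish the latter by a single elementary identity. Every complex bi-quadratic field $K$ is automatically a CM field: $K/\mathbb{Q}$ is Galois of degree $4$, and since a non-totally-real Galois extension of $\mathbb{Q}$ is necessarily totally complex, complex conjugation acts as a non-trivial involution on $K$, whose fixed field is a totally real quadratic subfield (namely $\mathbb{Q}(\sqrt{n})$ when $K = \mathbb{Q}(\sqrt{-m},\sqrt{n})$ with $n > 0$, and $\mathbb{Q}(\sqrt{m_1 m_2})$ when $K = \mathbb{Q}(\sqrt{-m_1},\sqrt{-m_2})$). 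Corollary \ref{cor1} therefore gives $p(\mathcal{O}_K) \leq 5$, so the theorem will follow once one shows the containment $4\mathcal{O}_K \subseteq \mathcal{R}_K$.

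To establish this containment I would use the identity
\[
4\alpha = (\alpha+1)^2 - (\alpha-1)^2, \qquad \alpha \in \mathcal{O}_K,
\]
together with the fact, emphasized in the introduction, that $-1$ is a sum of integral squares in any complex bi-quadratic field. Writing $-1 = \sum_{i=1}^{s} x_i^2$ with $x_i \in \mathcal{O}_K$ and $s = s(\mathcal{O}_K)$ converts the identity into
\[
4\alpha = (\alpha+1)^2 + \sum_{i=1}^{s} \bigl((\alpha-1)\,x_i\bigr)^2,
\]
which exhibits $4\alpha$ as a sum of $s+1$ integral squares; in particular $4\alpha \in \mathcal{R}_K$. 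Combining this with the Pythagoras bound $p(\mathcal{O}_K) \leq 5$ from the previous paragraph produces the required representation of $4\alpha$ as a sum of at most five integral squares.

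The main conceptual input is thus Corollary \ref{cor1}: no case-by-case analysis on the congruence classes of the parameters defining $K$ is required, and Theorem \ref{thm2} is not directly invoked in the argument. The one fact that must be supplied externally is that $-1$ is a sum of integral squares in every complex bi-quadratic $K$; this is the principal obstacle in principle, but it can be handled uniformly by exhibiting such a representation already inside an imaginary quadratic subfield of $K$, since every complex bi-quadratic field contains such a subfield and the resulting squares automatically lie in $\mathcal{O}_K$.
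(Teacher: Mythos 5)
Your proof is correct, and it takes a genuinely different --- and far more economical --- route than the paper's. The paper proceeds by cases on the integral basis of $K$: it handles $\mathbb{Q}(\sqrt{-m},\sqrt{-n})$ with $m\equiv n\equiv 3\pmod 4$ and $\mathbb{Q}(\sqrt{-m},\sqrt{n})$ with $(m,n)\equiv(3,1)\pmod 4$ via Theorem \ref{thm2} and the Zhang--Ji result, and treats the remaining cases coordinate-by-coordinate, invoking Niven's theorem for the terms $2b\sqrt{-r}$, the Kala--Yatsyna bound for multiples of $\sqrt{D}$ in the real quadratic subfield, and a subtraction argument passing through $8t\mathcal{O}_K$ to isolate $4\alpha$; the count of five squares is then extracted by repeating the argument of Theorem \ref{thm1}. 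Your single identity $4\alpha=(\alpha+1)^2-(\alpha-1)^2$, combined with $-1=\sum_{i=1}^{s}x_i^2$ where $s=s(\mathcal{O}_K)\le 4$ (Moser's theorem applied to an imaginary quadratic subfield of $K$, whose integers lie in $\mathcal{O}_K$), replaces all of this: it exhibits $4\alpha$ as a sum of $s+1\le 5$ integral squares outright, so even your detour through Corollary \ref{cor1} is redundant --- the explicit count $s+1$ already suffices, and the Pythagoras bound is not needed. Your verification that every complex bi-quadratic field is CM is also correct (Galois and not totally real forces totally imaginary, and exactly one of the three quadratic subfields is real), and your argument makes clear that the statement extends verbatim to any order in which $-1$ is a sum of four squares. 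What the paper's longer machinery buys is essentially nothing for this particular statement; it is aimed at the genuinely harder question of when $\alpha$ itself, rather than $4\alpha$, is a sum of integral squares, which your identity cannot address.
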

But it is yet to prove that whether $4$ is smallest such integer for which this property holds.
Using Theorem \ref{thm2} and \ref{thm3} we get the following.

\begin{cor}\label{cor2}
Let $K$ be any complex bi-quadratic field.

\begin{itemize}
    \item[(i)] If $K=\mathbb{Q}(\sqrt{-m},\sqrt{n})$, where $m\equiv 3 \pmod 4$ and $n\equiv 1 \pmod 4 $, and if $s(\mathcal{O}_{K})=2$ then $p(\mathcal{O}_{K})=3$

   \item[ (ii) ] Otherwise, if  $s(\mathcal{O}_{K})=2$ and then $p(4\mathcal{O}_{K})=3$.

\end{itemize}

\end{cor}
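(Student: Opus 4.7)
My approach is to combine the preceding theorems to obtain the upper bound $p\le 3$, and then produce the matching lower bound via a local Hilbert-symbol obstruction, which is where local class field theory enters.

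For the upper bound: in case (i), Theorem \ref{thm2} gives $R_K=\mathcal{O}_K$, and since the hypothesis $s(\mathcal{O}_K)=2$ is even, Theorem \ref{thm1} yields $p(\mathcal{O}_K)\le s(\mathcal{O}_K)+1=3$. In case (ii), Theorem \ref{thm3} gives $4\mathcal{O}_K\subseteq R_K$, so the same application of Theorem \ref{thm1} to elements of $4\mathcal{O}_K$ gives $p(4\mathcal{O}_K)\le 3$.

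For the lower bound I need to exhibit an element that is not a sum of two integral squares. The hypothesis $s(\mathcal{O}_K)=2$ forces $i\notin K$ (otherwise $-1=i^2$ would give $s(\mathcal{O}_K)=1$), so $K(i)/K$ is a nontrivial quadratic extension and admits a prime $\mathfrak{p}$ of $\mathcal{O}_K$ at which $-1\notin (K_\mathfrak{p}^\times)^2$. By local class field theory, an element $\alpha\in K_\mathfrak{p}^\times$ is a sum of two squares in $K_\mathfrak{p}$ if and only if it lies in the image of the norm map $N_{K_\mathfrak{p}(i)/K_\mathfrak{p}}$, equivalently if and only if the Hilbert symbol $(\alpha,-1)_\mathfrak{p}$ is trivial. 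Since the norm group has index $2$ in $K_\mathfrak{p}^\times$, there exists $\beta\in\mathcal{O}_K$ with $(\beta,-1)_\mathfrak{p}=-1$; such a $\beta$ is not a sum of two integral squares in $\mathcal{O}_K$, giving $p(\mathcal{O}_K)\ge 3$ for (i). For (ii) the same $\beta$ need not lie in $4\mathcal{O}_K$, but multiplicativity of the Hilbert symbol together with $(4,-1)_\mathfrak{p}=1$ gives $(4\beta,-1)_\mathfrak{p}=(\beta,-1)_\mathfrak{p}=-1$, so $4\beta\in 4\mathcal{O}_K$ also fails to be a sum of two integral squares, which yields $p(4\mathcal{O}_K)\ge 3$.

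\emph{Main obstacle.} The delicate step is the local class field theoretic input: certifying that the norm index is exactly $2$ and selecting an explicit witness $\beta$. In practice one would take $\mathfrak{p}$ to lie above $2$, since the dyadic local structure of the biquadratic field $K$ is the most transparent place to read off $-1\notin (K_\mathfrak{p}^\times)^2$ and to write down a unit or uniformizer whose Hilbert symbol with $-1$ is nontrivial. This case-by-case dyadic analysis, governed by the decomposition of $2$ in $K$ under the congruence hypotheses of (i) and under the ``otherwise'' clause of (ii), is precisely the computational heart of the corollary.
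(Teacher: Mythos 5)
Your proposal is correct and follows the same strategy as the paper: the upper bound comes from feeding Theorems \ref{thm2} and \ref{thm3} into Theorem \ref{thm1} with $s(\mathcal{O}_K)=2$, and the lower bound comes from local class field theory applied to the quadratic extension $K(i)/K$, where ``sum of two squares'' is identified with ``norm from $K_{\mathfrak{p}}(i)$'' and the norm group has index $2$. The differences are organizational rather than substantive. The paper argues by contradiction: if every element of $4\mathcal{O}_K$ were a sum of two squares, then density of $K$ in $K_{\mathcal{P}}$ together with the local square theorem (Lemma \ref{loc}) would force the local norm map to be surjective, contradicting the index being $2$; you instead directly produce a witness $\beta$ with nontrivial Hilbert symbol and pass from $\beta$ to $4\beta$ via multiplicativity and $(4,-1)_{\mathfrak{p}}=1$. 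Your freedom to choose any prime $\mathfrak{p}$ nonsplit in $K(i)$ (which exists by Chebotarev once $i\notin K$) is actually a little more robust than the paper's fixed choice of a prime $\mathcal{P}$ above $2$, which the paper asserts without proof to be totally ramified in $K(i)$ --- a claim that can fail if $-1$ is a square in $K_{\mathcal{P}}$. The one point you should make explicit is why the witness $\beta$ can be taken in $\mathcal{O}_K$ rather than merely in $K_{\mathfrak{p}}^{\times}$: the non-norm coset is open and contains an integral local element (a non-norm unit, or a uniformizer if all units are norms), so density of $\mathcal{O}_K$ in $\mathcal{O}_{K_{\mathfrak{p}}}$ together with the local square theorem supplies a global integer in that coset; this is exactly the role Lemma \ref{loc} plays in the paper's version.
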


 \section{Useful Results}

In this section we will summarize some results on quadratic and bi-quadratic fields which are beneficial to prove our main results.\\
In 1970, Moser \cite{MO} proved the following result which will be used in the proof of Corollary \ref{cor1}, Theorem \ref{thm2} and Theorem \ref{thm3}
\begin{thma}\label{thmM}
Let $K = \mathbb{Q}(\sqrt{-D})$ be an imaginary quadratic field. Then
$$
 s(K)=\begin{cases}
1 & \mbox{ if } D=1,\\
4 & \text{ if } D \equiv 7 \pmod 8, \\
2 & \mbox{ otherwise,}
\end{cases}
$$
Where $s(K)$ = the smallest number of squares
required for  representation of $-1$ as sum of squares of elements of $K$.
and 
$$
  s(\mathcal{O}_{K})=\begin{cases}
1 & \mbox{ if } D=1,\\
4 & \mbox{ if } D \equiv 7 \pmod 8, 
\end{cases}
$$
Otherwise,
$$ s(\mathcal{O}_{K})= \begin{cases}
2 & \mbox{ if } \mathcal{N}(\epsilon_{D}) = 1\\
3 &  \mbox{ if } \mathcal{N}(\epsilon _{D}) = -1
\end{cases}
$$
\end{thma}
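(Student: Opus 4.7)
The plan is to treat the level $s(K)$ of the field and the integral level $s(\mathcal{O}_K)$ separately, the former by a local--global argument and the latter by a mixture of explicit construction and unit-group analysis in the biquadratic overfield $L = K(i) = \mathbb{Q}(\sqrt{-D},\, i)$.

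First I would compute $s(K)$ using Pfister's theorem (the level of a field is a power of $2$, here necessarily finite since $K$ is not formally real) together with the Hasse--Minkowski local--global principle $s(K) = \max_v s(K_v)$. Odd finite places contribute local level at most $2$, and the archimedean places are all complex; the decisive place is above $2$. When $D \equiv 7 \pmod 8$ one has $-D \equiv 1 \pmod 8$, so $2$ splits in $K$, the completion is $\mathbb{Q}_2$, and $s(\mathbb{Q}_2) = 4$ forces $s(K) = 4$. In every other case ($D = 1$, or $D \equiv 3 \pmod 8$ with $2$ inert, or $D \equiv 1, 2 \pmod 4$ with $2$ ramified) a direct check at the completion above $2$ gives local level $\le 2$, and $D = 1$ even yields $s(K) = 1$ via $-1 = i^2$.

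For $s(\mathcal{O}_K)$, the cases $D = 1$ and $D \equiv 7 \pmod 8$ fall out easily. The first gives $s(\mathcal{O}_K) = 1$ from $i \in \mathcal{O}_K$. In the second, $s(\mathcal{O}_K) \ge s(K) = 4$ is automatic, and writing $-1 = (\sqrt{-D})^2 + (D - 1)$ and applying Gauss's three-squares theorem to $D - 1 \equiv 6 \pmod 8$ (not of the excluded form $4^a(8b+7)$) supplies an explicit four-square representation. In the remaining ``otherwise'' case I would factor $a^2 + b^2 = (a + bi)(a - bi)$ and read the equation $-1 = a^2 + b^2$ with $a, b \in \mathcal{O}_K$ as the existence of a unit of $\mathcal{O}_L$ with relative norm $-1$ down to $K$. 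A direct Galois computation on the fundamental unit $\epsilon_D \in \mathcal{O}_{\mathbb{Q}(\sqrt{D})}^\times \subseteq \mathcal{O}_L^\times$ gives $N_{L/K}(\epsilon_D) = \mathcal{N}(\epsilon_D)$, linking the existence of a two-square representation of $-1$ to the sign of $\mathcal{N}(\epsilon_D)$; in the complementary subcase, an explicit three-square representation of the shape $-1 = 1^2 + c^2 + (\sqrt{-D})^2$ (with $c^2 = D - 2$, or a small variant when $D - 2$ fails to be a square) would supply the matching upper bound $s(\mathcal{O}_K) = 3$.

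The main obstacle is the unit-group analysis in the ``otherwise'' case. The argument via $\mathcal{N}(\epsilon_D)$ only sees units of $\mathcal{O}_L$ coming from $\mu(L) \cdot \mathcal{O}_{L^+}^\times$, but the CM index $[\mathcal{O}_L^\times : \mu(L)\mathcal{O}_{L^+}^\times]$ can be $2$, in which case an extra unit may deliver relative norm $-1$ even when $\mathcal{N}(\epsilon_D) = 1$, and this must be controlled to obtain the clean dichotomy stated. The archetypal awkward case is $D = 3$, where $L = \mathbb{Q}(\zeta_{12})$ carries cyclotomic units absent from $\mathbb{Z}[\sqrt{3}]$; a careful treatment also has to separate $D \equiv 1, 2 \pmod 4$, where $\mathcal{O}_K = \mathbb{Z}[\sqrt{-D}]$ reduces matters cleanly to the Pell equation $x^2 - D y^2 = -1$, from $D \equiv 3 \pmod 8$, where the half-integer coefficients in $\mathbb{Z}[(1+\sqrt{-D})/2]$ must be tracked through the norm computation.
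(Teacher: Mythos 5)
The paper contains no proof of this statement to compare against: it is quoted as Moser's 1970 theorem and used as a black box, so your attempt can only be judged on its own terms. On those terms, your treatment of $s(K)$ (finiteness of the level via Pfister, $s(K)=\max_v s(K_v)$ via Hasse--Minkowski, with the dyadic place deciding everything and $2$ splitting exactly when $D\equiv 7\pmod 8$) is correct, as are the cases $D=1$ and $D\equiv 7\pmod 8$ of $s(\mathcal{O}_K)$; in particular $-1=(\sqrt{-D})^{2}+(D-1)$ with $D-1\equiv 6\pmod 8$ handled by Gauss's three-square theorem is exactly the right device for the upper bound there.

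The difficulty you flag in the ``otherwise'' branch is not an obstacle to be controlled but a fatal one: the dichotomy in terms of $\mathcal{N}(\epsilon_D)$, as stated, is false, so no proof of it can be completed. For $D=2$ one has $\mathcal{N}(1+\sqrt{2})=-1$, yet $-1=1^{2}+(\sqrt{-2})^{2}$, so $s(\mathcal{O}_K)=2$ rather than $3$; the same happens for $D=5,10$ via the negative Pell equation. Conversely, for $D=6$ one has $\mathcal{N}(5+2\sqrt{6})=+1$, yet writing $-1=(a_1+a_2\sqrt{-6})^{2}+(b_1+b_2\sqrt{-6})^{2}$ forces $(a_1,b_1)$ orthogonal to $(a_2,b_2)$ and hence an identity $(\alpha^{2}+\beta^{2})(k^{2}-6g^{2})=-1$, which is impossible modulo $3$; so $s(\mathcal{O}_K)=3$ rather than $2$ (similarly $D=35$). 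Pushing your own orthogonality/relative-norm computation through to the end yields the criterion that actually holds: for $D\equiv 1,2\pmod 4$ one gets $s(\mathcal{O}_K)=2$ exactly when $x^{2}-Dy^{2}=-1$ is solvable (equivalently $\mathcal{N}(\epsilon_D)=-1$, the opposite sign to the one stated), and for $D\equiv 3\pmod 8$ exactly when $x^{2}-Dy^{2}=-2$ is solvable, a condition not determined by $\mathcal{N}(\epsilon_D)$ at all since that norm is always $+1$ there. So the right move is not to repair the unit-index analysis in $K(i)$ but to correct the statement itself (consulting Moser's original paper for its precise form); note that this also affects the Examples section and the corollary of the present paper that certify $s(\mathcal{O}_K)=2$ from the condition $\mathcal{N}(\epsilon_D)=1$.
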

In 1940, Ivan Niven \cite{Ni} proved the following two theorems on complex quadratic fields which we will use in the proof of Theorem \ref{thm3}. 
\begin{thma}\label{thmb}
Every integer of the form $a + 2b \sqrt{-D}$ of the complex quadratic field $\mathbb{Q}(\sqrt{-D})$ is expressible as a sum of three squares of integers of the field.
\end{thma}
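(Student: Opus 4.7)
The basic strategy is to isolate the $\sqrt{-D}$-part inside a single square, leaving a rational integer to be expressed as a sum of two further squares in $\mathcal{O}_{K}$. Set $\alpha_{1} = b + \sqrt{-D}$, so that $\alpha_{1}^{2} = (b^{2} - D) + 2b\sqrt{-D}$. Then
\[
a + 2b\sqrt{-D} = \alpha_{1}^{2} + N, \qquad N := a + D - b^{2} \in \mathbb{Z},
\]
and the problem reduces to writing $N$ as $\alpha_{2}^{2} + \alpha_{3}^{2}$ with $\alpha_{2}, \alpha_{3} \in \mathcal{O}_{K}$.

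Writing $\alpha_{j} = x_{j} + y_{j}\sqrt{-D}$ for $j = 2, 3$ (with the obvious modification using the basis $\{1, (1+\sqrt{-D})/2\}$ when $D \equiv 3 \pmod 4$), the equation $\alpha_{2}^{2} + \alpha_{3}^{2} = N$ splits into the Diophantine pair
\[
x_{2}^{2} + x_{3}^{2} - D(y_{2}^{2} + y_{3}^{2}) = N, \qquad x_{2} y_{2} + x_{3} y_{3} = 0.
\]
Specializing $y_{2} = y_{3} = 0$ reduces to the classical representation $N = x_{2}^{2} + x_{3}^{2}$ (governed by Fermat); specializing $x_{3} = y_{2} = 0$ reduces to the Pell-type equation $N = x_{2}^{2} - D y_{3}^{2}$. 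Invoking the Fermat two-square theorem together with the solvability theory of $x^{2} - Dy^{2} = N$, one checks that one of these specializations succeeds for most triples $(a, b, D)$, via a case analysis modulo small moduli determined by $D$ and the parities of $a, b$.

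The principal obstacle is the residual family of $(a, b, D)$ for which $N$ admits neither a two-square nor a Pell-type decomposition --- for instance $N = 3$ in $\mathbb{Q}(\sqrt{-5})$, which is neither $p^{2} + q^{2}$ nor $p^{2} - 5q^{2}$. The remedy is to modify the first square: replace $\alpha_{1} = b + \sqrt{-D}$ by $\alpha_{1} = (b - k) + c\sqrt{-D}$ for carefully chosen integers $k, c$. This shifts the residual target from the pure integer $N$ to $[a - (b-k)^{2} + c^{2} D] + 2(b(1-c) + ck)\sqrt{-D}$, an element of $\mathcal{O}_{K}$ whose real and imaginary parts lie in more favourable residue classes; small choices such as $c = 1, k = \pm 1$ typically suffice, and Lagrange's four-square theorem then produces the remaining two summands. (For example, in $\mathbb{Q}(\sqrt{-5})$ the choice $c = 1, k = -1$ turns the representation of $3 + 2\sqrt{-5}$ into that of $4 - 2\sqrt{-5}$, which does admit a two-square decomposition.) Establishing that such an admissible pair $(k, c)$ exists uniformly across all $(a, b, D)$ is the central technical task; this is where a careful analysis modulo $8$ (distinguishing the cases $D \equiv 1, 2, 3, 7 \pmod 8$) together with Gauss's three-square theorem is unavoidable.
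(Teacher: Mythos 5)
This statement is Theorem \ref{thmb}, which the paper does not prove: it is quoted from Niven [Ni] as a known input to Theorem \ref{thm3}. So there is no in-paper argument to compare yours against; your proposal has to stand on its own, and as written it is a strategy outline with the decisive step missing.

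Your opening reduction is fine: with $\alpha_{1}=b+\sqrt{-D}$ you correctly reduce the problem to writing the rational integer $N=a+D-b^{2}$ as a sum of \emph{two} squares in $\mathcal{O}_{K}$, and your two displayed equations for $\alpha_{2}^{2}+\alpha_{3}^{2}=N$ are correct. The gap is that two-square representability of $N$ is a genuinely restrictive condition, not a generic one. Your two specializations ask that $N$ be of the form $x^{2}+y^{2}$ or $x^{2}-Dy^{2}$; both are value sets of binary norm forms and have density zero in $\mathbb{Z}$, so ``one of these specializations succeeds for most triples'' is not true in any useful sense, and even the unspecialized two-square problem carries a local obstruction at $2$ --- the same index-two obstruction $[K_{\mathcal{P}}^{*}:N(L_{\mathcal{Q}}^{*})]=2$ that the paper itself exploits in the proof of Corollary \ref{cor2}(ii) to exhibit elements that are \emph{not} sums of two squares. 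Your remedy --- replace $\alpha_{1}$ by $(b-k)+c\sqrt{-D}$ and assert that some small $(k,c)$ always pushes the residue into the set of sums of two squares --- is precisely the content of the theorem, and you explicitly defer it as ``the central technical task.'' The appeal to Lagrange's four-square theorem there is a non sequitur: Lagrange supplies four rational-integer squares, you have only two slots left, and after the shift the residue is no longer a rational integer. The worked example $3+2\sqrt{-5}$ checks out, but one verified instance does not discharge the uniform existence claim over all $(a,b,D)$ (nor the $D\equiv 3\pmod 4$ case, where the half-integer basis changes the Diophantine system and which you wave at as an ``obvious modification''). To make this a proof you would need an actual lemma stating that for every $(a,b,D)$ there exist $k,c$ with $a-(b-k)^{2}+c^{2}D+2\bigl(b(1-c)+ck\bigr)\sqrt{-D}$ a sum of two squares in $\mathcal{O}_{K}$, established by the mod-$8$ and three-square analysis you only gesture toward; that lemma is where Niven's work actually happens, and it is absent here.
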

In \cite{Wi} K.S William classified the integral basis for all bi-quadratic fields. For this, he showed that, without loss of generality we can assume $(m,n) \equiv (1,1), (1,2), (2,3), (3,3) \pmod 4$, where $m$ and $n$ are two distinct, square-free integers(not necessarily positive), and proved the following theorem.
\begin{thma}\label{thmc}
Let $(m,n)$ be as above, $gcd(m,n)=d$, $m_{1}=\frac{m}{d}$, $n_{1}=\frac{n}{d}$, $l= \frac{mn}{d^2}$ and then the integral basis of $\mathbb{Q}(\sqrt{m}, \sqrt{n})$ is given by
\begin{itemize}
    \item[(i)]$\{1, \frac{1+\sqrt{m}}{2}, \frac{1+\sqrt{n}}{2}, \frac{1+\sqrt{m}+\sqrt{n}+\sqrt{l}}{4}$\} if $m\equiv n \equiv 1 \pmod4$ and $m_{1} \equiv n_{1} \equiv 1 \pmod 4$.
    \item[(ii)]$\{1, \frac{1+\sqrt{m}}{2}, \frac{1+\sqrt{n}}{2}, \frac{1-\sqrt{m}+\sqrt{n}+\sqrt{l}}{4}$\} if $m\equiv n \equiv 1 \pmod4$ and $m_{1} \equiv n_{1} \equiv 3 \pmod 4$
    \item[(iii)]$\{1, \frac{1+\sqrt{m}}{2}, \sqrt{n}, \frac{\sqrt{n}+\sqrt{l}}{2}$\} if $m\equiv 1 \pmod4$ and $ n \equiv 2 \pmod 4$
    \item[(iv)]$\{1, \sqrt{m}, \sqrt{n}, \frac{\sqrt{m}+\sqrt{l}}{2}$\} if $m\equiv 2 \pmod4$ and $ n \equiv 3 \pmod 4$
    \item[(v)]$\{1, \sqrt{m}, \frac{\sqrt{m}+\sqrt{n}}{2}, \frac{1+\sqrt{l}}{2}$\} if $m\equiv n \equiv 3 \pmod4$ 
\end{itemize}
\end{thma}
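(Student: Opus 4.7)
The plan is to determine $\mathcal{O}_{K}$ for $K=\mathbb{Q}(\sqrt{m},\sqrt{n})$ by combining descent to the three quadratic subfields with a denominator analysis on the natural $\mathbb{Q}$-basis. Let $\mathrm{Gal}(K/\mathbb{Q})=\{1,\sigma,\tau,\sigma\tau\}$ act on $(\sqrt{m},\sqrt{n})$ by independent sign changes, and observe that $K$ contains three quadratic subfields $\mathbb{Q}(\sqrt{m})$, $\mathbb{Q}(\sqrt{n})$ and $\mathbb{Q}(\sqrt{l})$, where $l=mn/d^{2}$ and $d=\gcd(m,n)$. Every $\alpha\in K$ has a unique expression $\alpha=a+b\sqrt{m}+c\sqrt{n}+e\sqrt{l}$ with $a,b,c,e\in\mathbb{Q}$.

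First I would extract integrality constraints. Since $\mathcal{O}_{K}\cap F=\mathcal{O}_{F}$ for each quadratic subfield $F$, summing $\alpha$ with each of its three nontrivial Galois conjugates yields $2a+2b\sqrt{m}\in\mathcal{O}_{\mathbb{Q}(\sqrt{m})}$, $2a+2c\sqrt{n}\in\mathcal{O}_{\mathbb{Q}(\sqrt{n})}$ and $2a+2e\sqrt{l}\in\mathcal{O}_{\mathbb{Q}(\sqrt{l})}$. Using the classical description $\mathcal{O}_{\mathbb{Q}(\sqrt{r})}=\mathbb{Z}\bigl[\tfrac{1+\sqrt{r}}{2}\bigr]$ if $r\equiv 1\pmod 4$ and $\mathbb{Z}[\sqrt{r}]$ otherwise, these three conditions pin down the denominators of $a,b,c,e$ (they always divide $4$) together with explicit congruences on the numerators mod $2$ and mod $4$.

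Next I would reduce to the five cases by a parity argument on $(m,n)\bmod 4$: neither residue is $0$ (square-freeness), at most one is $2$ (coprimality of the $2$-parts after removing $d$), and the case $(1,3)$ is absorbed into $(3,1)$ by symmetry. In each of the five remaining cases one reads off from the three constraints exactly which lattice points of $(\tfrac14\mathbb{Z})^{4}$ are admissible, and then verifies that the proposed basis in (i)--(v) spans precisely this set. The forward direction (each listed generator is an algebraic integer) reduces to exhibiting a monic polynomial in $\mathbb{Z}[x]$ having it as a root; the reverse inclusion uses the descent constraints from the previous step to forbid finer denominators.

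The main obstacle is the split between cases (i) and (ii). When $m\equiv n\equiv 1\pmod 4$, a direct computation gives $l\equiv 1\pmod 4$ precisely when $m_{1}\equiv n_{1}\pmod 4$; in the matching case (i) the symmetric combination $(1+\sqrt{m}+\sqrt{n}+\sqrt{l})/4$ has integer trace and norm, while in case (ii) with $m_{1}\equiv n_{1}\equiv 3\pmod 4$ the corresponding symmetric combination fails, and one must flip the sign of $\sqrt{m}$ so that the product of all four Galois conjugates lies in $\mathbb{Z}$. Verifying that no smaller-denominator algebraic integer has been overlooked in this subcase requires the most delicate mod-$4$ bookkeeping, as it is the only case where all three descent congruences become nontrivial simultaneously. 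Cases (iii), (iv) and (v) follow the same template with only mod-$2$ checks, so the heart of the argument is the $(1,1)$ case.
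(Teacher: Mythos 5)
The paper itself offers no proof of this statement: Theorem~\ref{thmc} is imported verbatim from K.~S.~Williams \cite{Wi} as a known classification, so there is no internal argument to compare against. Your reconstruction follows what is essentially Williams' own route --- relative traces to the three quadratic subfields $\mathbb{Q}(\sqrt{m})$, $\mathbb{Q}(\sqrt{n})$, $\mathbb{Q}(\sqrt{l})$ to bound denominators, followed by case-by-case integrality checks --- and the overall architecture is sound. (It is also the same trace-descent device the paper does use internally, in the proof of Lemma~\ref{lem1}.)

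Two steps are stated in a way that would fail as written. First, the three descent constraints do not ``pin down'' $\mathcal{O}_{K}$: they only cut out an over-lattice $L\supseteq\mathcal{O}_{K}$ that strictly contains the span $M$ of the candidate basis, so it is not true that the listed basis ``spans precisely this set.'' In case (i), for example, the constraints amount to $4a,4b,4c,4e\in\mathbb{Z}$ with $a-b,\,a-c,\,a-e\in\tfrac{1}{2}\mathbb{Z}$, and the resulting $L$ contains $\tfrac{1}{2}$; one computes $[L:M]=2$ and must separately show that the nontrivial coset (represented by $\tfrac{1}{2}$) contains no algebraic integer. The reverse inclusion therefore needs a finite-index computation plus non-integrality checks on coset representatives (or, more cleanly, a discriminant comparison via the conductor--discriminant formula), not the descent constraints alone. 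Second, your criterion for splitting (i) from (ii) --- ``$l\equiv 1\pmod 4$ precisely when $m_{1}\equiv n_{1}\pmod 4$'' --- does no work here: when $m\equiv n\equiv 1\pmod 4$ one automatically has $m_{1}\equiv n_{1}\equiv d\pmod 4$ and hence $l=m_{1}n_{1}\equiv 1\pmod 4$ in \emph{both} subcases. The actual dichotomy is $m_{1}\equiv n_{1}\equiv 1$ versus $\equiv 3\pmod 4$, and it enters through the multiplication rules $\sqrt{n}\sqrt{l}=n_{1}\sqrt{m}$ and $\sqrt{m}\sqrt{l}=m_{1}\sqrt{n}$ when one computes the minimal polynomial of $\tfrac{1}{4}(1\pm\sqrt{m}+\sqrt{n}+\sqrt{l})$; note also that integer trace and norm alone do not certify integrality of a degree-four element --- all four symmetric functions must be checked. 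With these two repairs the argument goes through and recovers Williams' proof.
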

In \cite{ZJ} Zang and Ji proved the following result on sum of integral squares over certain complex bi-quadratic fields, which we will use in Theorem \ref{thm2} and Theorem \ref{thm3}.
\begin{thma}\label{thmd}
Let $K= \mathbb{Q}(\sqrt{-m}, \sqrt{-n})$, where $m \equiv n \equiv 3 \pmod 4$ are two distinct, positive, square-free integers, then $\mathcal{O}_{K} = \mathcal{R}_{K}$ 
\end{thma}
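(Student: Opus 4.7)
The plan is to show that a $\mathbb{Z}$-basis of $\mathcal{O}_K$ lies entirely inside $\mathcal{R}_K$. The keystone is $-1 \in \mathcal{R}_K$: by Theorem~\ref{thmM} applied to the imaginary quadratic subfield $\mathbb{Q}(\sqrt{-m}) \subset K$, we have $s(\mathcal{O}_{\mathbb{Q}(\sqrt{-m})}) \leq 4$, so $-1$ is a sum of at most four integral squares in $K$. Since $\mathcal{R}_K$ is manifestly closed under sums and under products (via the identity $(\sum\alpha_i^2)(\sum\beta_j^2) = \sum_{i,j}(\alpha_i\beta_j)^2$), combining this with Lagrange's four-square theorem yields $\mathbb{Z}\subset\mathcal{R}_K$.

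Set $\omega = (1+\sqrt{-m})/2$, $\eta = (1+\sqrt{-n})/2$, $d = \gcd(m,n)$, $m_1 = m/d$, $n_1 = n/d$, $l = m_1n_1$, and $\phi = (1+\sqrt{l})/2$. Since $m \equiv n \equiv 3 \pmod 4$ are odd, $d$ is odd, $m_1 \equiv n_1 \pmod 4$, and $l \equiv 1 \pmod 4$, so $\phi$ lies in the maximal order of the real quadratic subfield $\mathbb{Q}(\sqrt{l}) \subset K$. The minimal polynomials of $\omega,\eta,\phi$ yield
\[
\omega = \omega^2 + \tfrac{m+1}{4}, \qquad \eta = \eta^2 + \tfrac{n+1}{4}, \qquad \phi = \phi^2 - \tfrac{l-1}{4},
\]
in which each constant on the right is a rational integer, hence an element of $\mathcal{R}_K$ by the first paragraph. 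Therefore $\omega,\eta,\phi \in \mathcal{R}_K$, each being a sum of at most five squares in $\mathcal{O}_K$.

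It remains to show $\mathcal{O}_K \subseteq \mathbb{Z}[\omega,\eta,\phi]$. Apply Theorem~\ref{thmc} to the pair $(M,N) = (-m,-n)$, which satisfies $M \equiv N \equiv 1 \pmod 4$: case~(i) occurs when $d \equiv 1 \pmod 4$ (equivalently $m_1 \equiv n_1 \equiv 3 \pmod 4$) and case~(ii) occurs when $d \equiv 3 \pmod 4$ (equivalently $m_1 \equiv n_1 \equiv 1 \pmod 4$). Using the sign conventions $\sqrt{-m}\sqrt{-n} = -\sqrt{mn} = -d\sqrt{l}$ and $\sqrt{-m}\sqrt{l} = m_1\sqrt{-n}$ (both read off from $\sqrt{-m}=i\sqrt{m}$, $\sqrt{-n}=i\sqrt{n}$), a direct computation expresses the fourth integral-basis element $\tau$ as
\[
\tau = \omega\eta + \tfrac{d-1}{2}\phi - \tfrac{d-1}{4} \;\; (\text{case (i)}), \qquad \tau = (1-\omega)(1-\phi) + \tfrac{1-m_1}{2}\eta - \tfrac{1-m_1}{4} \;\; (\text{case (ii)}).
\]
The fractional coefficients are rational integers precisely because $d \equiv 1 \pmod 4$ in case~(i) and $m_1 \equiv 1 \pmod 4$ in case~(ii), so in either case $\tau \in \mathbb{Z}[\omega,\eta,\phi] \subseteq \mathcal{R}_K$. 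Since $\{1,\omega,\eta,\tau\}$ is a $\mathbb{Z}$-basis of $\mathcal{O}_K$ and $\mathbb{Z}\subset\mathcal{R}_K$, we obtain $\mathcal{O}_K \subseteq \mathcal{R}_K$, so $\mathcal{R}_K = \mathcal{O}_K$.

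The hard part is the case analysis in the last step: one must pin down the sign convention for Williams' $\sqrt{l}$ (which equals $\sqrt{M}\sqrt{N}/d$, i.e.\ $-\sqrt{m_1n_1}$ rather than the positive real root) so that the fourth basis element takes the form described, and then verify in each subcase that the displayed expression for $\tau$ does indeed lie in $\mathbb{Z}[\omega,\eta,\phi]$. Once this bookkeeping is settled, the reduction to $\omega,\eta,\phi \in \mathcal{R}_K$ is purely formal ring manipulation.
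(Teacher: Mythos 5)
Your argument is correct, and it is essentially the standard approach: the paper does not prove Theorem \ref{thmd} itself (it is quoted from Zhang--Ji), but your proof mirrors exactly the method the paper uses for the analogous Theorem \ref{thm2} --- get $\mathbb{Z}\subseteq\mathcal{R}_K$ from $-1\in\mathcal{R}_K$ plus Lagrange, put each integral-basis element in $\mathcal{R}_K$ via its minimal polynomial, and use that $\mathcal{R}_K$ is a subring. The only divergence is bookkeeping: you derive the fourth basis element from Williams' Theorem \ref{thmc} with a two-case sign analysis (which checks out), whereas the paper's restated basis (A)(i) writes it directly as $\frac{(1+\sqrt{-m})(1+\sqrt{l})}{4}=\omega\phi$, a product of two elements already shown to lie in $\mathcal{R}_K$, which eliminates the case split entirely.
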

Recently, Kala and Yatsyna \cite{VY} proved the following result for real quadratic fields which we will use in Theorem \ref{thm3}.
\begin{thma}\label{thme}
Let ${K}=\mathbb{Q}(\sqrt{D})$ with $ D \geq 2$ square-free. Let $\kappa = 1$ if $D \equiv 1 \pmod4 $ and $\kappa = 2$ if $ D \equiv 2, 3 \pmod 4 $.
\begin{itemize}
 \item[a)] If $ s < \kappa \frac{\sqrt{D}}{4}$ , then not all elements of $s\mathcal{O}_{K}^{+}$ are represented as the sum of squares in $\mathcal{O}_{K}$
 \item[b)]  If $ s \geq \frac{D}{2}$ , then all elements of $ \kappa  s\mathcal{O}^{+}$ are represented as the sum of five squares in $\mathcal{O}_{K}$
 \item[(c)] If $s$ is odd and $D \equiv 2,3 \pmod 4 $ then there exist elements of $s \mathcal{O}_{K}^{+}$ that are not sums of squares in $\mathcal{O}_{K}$
  \end{itemize}
\end{thma}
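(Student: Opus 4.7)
The three parts of the theorem have different characters: (a) and (c) are obstruction results, whereas (b) is a positive representation statement, so I would treat them separately.

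For part (c), my strategy is a parity obstruction on the $\sqrt{D}$-coefficient. When $D\equiv 2,3\pmod 4$ we have $\mathcal{O}_K=\mathbb{Z}[\sqrt{D}]$, and for $\beta=x+y\sqrt{D}\in\mathcal{O}_K$ the square expands as $\beta^2=(x^2+Dy^2)+2xy\sqrt{D}$. Hence every sum of integral squares has even $\sqrt{D}$-coefficient. Choosing a totally positive $\gamma\in\mathcal{O}_K$ whose $\sqrt{D}$-coefficient is odd (for instance $\gamma=N+\sqrt{D}$ with $N$ chosen large enough to enforce total positivity), the element $s\gamma\in s\mathcal{O}_K^+$ still has an odd $\sqrt{D}$-coefficient because $s$ is odd, so it cannot be a sum of integral squares.

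For part (a), I would push the same computation further into a quantitative obstruction. If $\alpha=A+B\sqrt{D}$ admits $\alpha=\sum_i\beta_i^2$ with $\beta_i=x_i+y_i\sqrt{D}$, then $A=\sum(x_i^2+Dy_i^2)$ and $B=2\sum x_iy_i$, so Cauchy--Schwarz yields $|B|\le 2A/\sqrt{D}$. Combined with total positivity $A>|B|\sqrt{D}$ and with the fact that each summand $x_i^2+Dy_i^2$ is a nonnegative rational integer, this forces the $\sqrt{D}$-coefficient of any sum of integral squares of ``small trace'' in $s\mathcal{O}_K^+$ to lie in a discrete and rather sparse set. Under the hypothesis $s<\kappa\sqrt{D}/4$, the intervals allowed for $B$ leave gaps; I would then exhibit an explicit $\alpha\in s\mathcal{O}_K^+$ of minimal trace whose $\sqrt{D}$-coefficient falls in such a gap. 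The $\kappa$-dichotomy reflects whether $\mathcal{O}_K=\mathbb{Z}[\sqrt{D}]$ or $\mathcal{O}_K=\mathbb{Z}[(1+\sqrt{D})/2]$, which is where the two cases of $\kappa$ enter.

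For part (b), the plan is direct construction. Given $\alpha=\kappa s\gamma\in\kappa s\mathcal{O}_K^+$ with $\alpha=A+B\sqrt{D}$, I would seek $\alpha=\sum_{i=1}^{5}(x_i+y_i\sqrt{D})^2$, which reduces to the two diophantine conditions $A=\sum x_i^2+D\sum y_i^2$ and $B=2\sum x_iy_i$. Fixing a small pattern of the $y_i$'s (for example, one or two of them in $\{\pm1\}$ and the rest zero, with signs chosen to match the parity and sign of $B/2$) turns the $B$-equation into a single linear condition on the $x_i$, after which the $A$-equation becomes a representation of a nonnegative rational integer by at most four integer squares; this is handled by Lagrange's four-square theorem, with one square of slack available. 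The hypothesis $s\ge D/2$ is precisely what guarantees that the residual rational integer $A-D\sum y_i^2-(\text{fixed portion of }\sum x_i^2)$ is nonnegative, so the classical representation applies. The main obstacle is exactly this last step: one must select the $y_i$'s and the ``pinned'' $x_i$'s so that (i) the $B$-equation is integrally solvable, (ii) the residual is nonnegative, and (iii) the fifth square provides enough flexibility to bypass the Legendre obstruction $4^k(8m+7)$ in any problematic residue class. This balancing is where the sharp bound $s\ge D/2$ is consumed, and I expect it requires a short case analysis by the residues of $A$ and $B$ modulo small integers.
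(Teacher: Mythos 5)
A preliminary remark: the paper offers no proof of this statement. It is Theorem~\ref{thme}, imported verbatim from Kala and Yatsyna \cite{VY} as an external ingredient for the proof of Theorem~\ref{thm3}, so there is no internal argument to compare yours against; I can only judge the proposal on its own merits.

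Your part (c) is correct and complete: for $D\equiv 2,3\pmod 4$ one has $\mathcal{O}_K=\mathbb{Z}[\sqrt{D}]$, every square (hence every sum of squares) has even $\sqrt{D}$-coefficient, and $s(N+\sqrt{D})$ with $s$ odd and $N>\sqrt{D}$ is a totally positive element of $s\mathcal{O}_K^+$ with odd $\sqrt{D}$-coefficient. Parts (a) and (b), however, contain genuine gaps. For (a), the inequality you extract from Cauchy--Schwarz is not an obstruction at all: writing $P=\sum x_i^2$ and $Q=\sum y_i^2$, one has $A=P+DQ\ge 2\sqrt{DPQ}\ge \sqrt{D}\,|B|$, which is precisely the statement that a sum of squares is totally nonnegative, so it excludes nothing from $s\mathcal{O}_K^+$; and you never actually exhibit the promised element or verify that the constant $\kappa\sqrt{D}/4$ suffices. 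The obstruction that works is cruder: if some $y_i\neq 0$ then the rational part of the sum already exceeds a threshold on the order of $D$ (or $D/4$ in the half-integral case), so any totally positive $\alpha$ with nonzero $\sqrt{D}$-part and rational part below that threshold is not a sum of squares, and one checks that a suitable multiple of $\lceil\sqrt{D}\rceil+\sqrt{D}$ (or its half-integral analogue) falls below the threshold exactly when $s$ is small compared with $\sqrt{D}$. For (b), the specific construction you sketch does not close: pinning a single $y_1=1$ forces $x_1=B/2$, and the residual $A-B^2/4-D$ can be negative even for totally positive $\alpha$ with $A$ as large as you like (take $A$ just above $|B|\sqrt{D}$ with $|B|$ large, so that $B^2/4\approx A^2/(4D)\gg A$); hence the hypothesis $s\ge D/2$ is not ``consumed'' where you claim it is. One must distribute the coefficient $B/2$ over several $\beta_i$ and control their sizes, which is the actual content of the argument in \cite{VY}; acknowledging this step as ``the main obstacle'' is not the same as carrying it out.
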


\section{Preparation}

In this section, using Theorem \ref{thmc}, and after making some possible changes, we will give a complete list of integral bases $\mathcal{B}$ for complex bi-quadratic fields and will prove some lemmas which are helpful to establish our main results.\\
An analogous result of the following was proved in \cite{ZJ} by Zhang and Ji and our proof goes along the similar line. We include a proof for the sake of completeness.

\begin{lem}\label{lem1}
Let $K = \mathbb{Q}(\sqrt{-m},\sqrt{n})$ be as in Theorem \ref{thm1} and  $\ell=-mn/\gcd(m,n)^2$. Then 
$$ 
\mathcal{B}: = \left\{1, \frac{(1+ \sqrt{-m})(1+ \sqrt{\ell})}{4}, \frac{1+\sqrt{-m}}{2},\frac{1+\sqrt{n}}{2} \right\}
$$
is an integral basis for $\mathcal{O}_{K}$.
\end{lem}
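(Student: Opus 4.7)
The plan is to leverage the integral basis provided by Theorem~\ref{thmc} (applied to $(m',n') = (-m, n)$) and then carry out an explicit integer change of basis. Since $\mathcal{B}$ already shares the three elements $1$, $\frac{1+\sqrt{-m}}{2}$, $\frac{1+\sqrt{n}}{2}$ with any Theorem~\ref{thmc} basis, it suffices to verify that $\alpha := \frac{(1+\sqrt{-m})(1+\sqrt{\ell})}{4}$ lies in $\mathcal{O}_K$ and that the remaining Theorem~\ref{thmc} generator $\beta$ lies in the $\mathbb{Z}$-span of $\mathcal{B}$.

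To see that $\alpha \in \mathcal{O}_K$, I would first collect congruences modulo~$4$. From $m \equiv 3 \pmod 4$ we get $-m \equiv 1 \pmod 4$; since $m$ is odd, $d := \gcd(m,n)$ is odd, so $d^2 \equiv 1 \pmod 4$ and hence $\ell = -mn/d^2 \equiv -mn \equiv 1 \pmod 4$. Consequently $\tfrac{1+\sqrt{-m}}{2}$ and $\tfrac{1+\sqrt{\ell}}{2}$ are both quadratic integers, and their product $\alpha$ lies in $\mathcal{O}_K$. A consistent choice of square roots further gives $\sqrt{-m}\sqrt{\ell} = -m_1 \sqrt{n}$, where $m_1 = m/d$.

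Next I would apply Theorem~\ref{thmc} to the pair $(-m,n)$: since $-m \equiv n \equiv 1 \pmod 4$, we land in case (i) or (ii). Case (i) occurs precisely when $d \equiv 1 \pmod 4$ (equivalently $m_1 \equiv 3$, $n_1 \equiv 1 \pmod 4$), with $\beta = \tfrac{1+\sqrt{-m}+\sqrt{n}+\sqrt{\ell}}{4}$; case (ii) occurs when $d \equiv 3 \pmod 4$ (equivalently $m_1 \equiv 1$, $n_1 \equiv 3 \pmod 4$), with $\beta = \tfrac{1-\sqrt{-m}+\sqrt{n}+\sqrt{\ell}}{4}$. In each case I would compute $4(\alpha - \beta)$ directly, which under the sign convention collapses to $-(m_1+1)\sqrt{n}$ in case (i) and $2\sqrt{-m} - (m_1+1)\sqrt{n}$ in case (ii). Rewriting $\sqrt{-m} = 2\cdot\tfrac{1+\sqrt{-m}}{2} - 1$ and $\sqrt{n} = 2\cdot\tfrac{1+\sqrt{n}}{2} - 1$ and collecting terms then expresses $\alpha - \beta$ as a $\mathbb{Z}$-linear combination of $\mathcal{B}\setminus\{\alpha\}$; the integrality of the coefficients comes from $m_1 + 1 \equiv 0 \pmod 4$ in case~(i) and $m_1 - 1 \equiv 0 \pmod 4$ in case~(ii). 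Hence $\beta$ lies in the $\mathbb{Z}$-span of $\mathcal{B}$, so $\mathcal{B}$ is an integral basis of $\mathcal{O}_K$. The main technical obstacle is precisely this parity bookkeeping: the sign of $\sqrt{\ell}$ and the precise shape of $\beta$ in Theorem~\ref{thmc} must line up so that the divisions by $4$ in the exchange step actually yield integers, and this is where the hypothesis $m \equiv 3$, $n \equiv 1 \pmod 4$ (via its consequences for $m_1$ and $n_1$) is used in an essential way.
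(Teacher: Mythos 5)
Your proposal is correct, but it proves the lemma by a genuinely different route than the paper. The paper argues directly: it first checks that $\mathcal{B}$ is $\mathbb{Q}$-linearly independent, writes an arbitrary $\alpha\in\mathcal{O}_K$ as a rational combination of $\mathcal{B}$, and then forces the coefficients into $\mathbb{Z}$ by computing the traces $Tr_{K/K_i}(\alpha)$ down to the three quadratic subfields $\mathbb{Q}(\sqrt{-m})$, $\mathbb{Q}(\sqrt{n})$, $\mathbb{Q}(\sqrt{\ell})$ and using that these traces are quadratic integers (this is the Zhang--Ji style argument the author cites). You instead take Williams' classification (Theorem~\ref{thmc}) applied to the pair $(-m,n)\equiv(1,1)\pmod 4$ as the starting point and perform an explicit unimodular exchange of the fourth generator; I checked your case split ($d\equiv 1$ versus $d\equiv 3\pmod 4$), the identity $\sqrt{-m}\sqrt{\ell}=-m_1\sqrt{n}$, the two values of $4(\alpha-\beta)$, and the divisibilities $4\mid m_1+1$ (case i) and $4\mid m_1-1$ (case ii), and they all work out. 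Each approach has its merits: yours is shorter once Theorem~\ref{thmc} is granted and, as a bonus, explicitly establishes $\frac{(1+\sqrt{-m})(1+\sqrt{\ell})}{4}=\frac{1+\sqrt{-m}}{2}\cdot\frac{1+\sqrt{\ell}}{2}\in\mathcal{O}_K$, a containment the paper's proof leaves implicit even though it is needed to conclude that the $\mathbb{Z}$-span of $\mathcal{B}$ does not exceed $\mathcal{O}_K$; on the other hand, the paper's trace computation is self-contained and immune to the one fragile point of your argument, namely the dependence on Williams' exact sign convention for $\sqrt{l}$ and on the precise shape of the case-(i)/(ii) generators, which you rightly flag as the place where the parity bookkeeping must be done carefully.
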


\begin{proof}
Let there exist $a_1, a_2, a_3, a_4\in \mathbb{Q}$, 
$$
a_1 +  a_2 \frac{(1+\sqrt{-m})(1+\sqrt{\ell})}{4}+a_3 \frac{1+\sqrt{-m}}{2}+ a_4\frac{1+\sqrt{n}}{2} = 0.
$$
Then $ a_4=0,  \frac{a_2(1+\sqrt{\ell})}{2} +a_3=0$ and  $a_1+\frac{a_2(1+\sqrt{\ell})}{4}+\frac{a_3}{2}=0$. These together imply that $a_1=a_2=a_3=a_4=0$, which shows that  
$\mathcal{B}$ is a basis for $K$ over $\mathbb{Q}$. Thus for any $\alpha \in \mathcal{O}_K$, we can write
$$ \alpha = x_1 +x_2 \frac{(1+\sqrt{-m})(1+\sqrt{\ell})}{4}+  x_3 \frac{1+\sqrt{-m}}{2}+ x_4 \frac{1+\sqrt{n}}{2}.$$ 
Thus it suffices to show that $x_i (i=1,\ldots,4)\in \mathbb{Z}$ to prove that $\mathcal{B}$ is indeed an integral basis for $\mathcal{O}_K$. 

Note that $\ell\equiv 1 \pmod 4$ and $K$ has three quadratic sub-fields ${K}_{1}= \mathbb{Q}(\sqrt{-m}), {K}_{2}= \mathbb{Q}(\sqrt{n})$ and ${K}_{3}= \mathbb{Q}(\sqrt{\ell})$. Now,
\begin{eqnarray*}
Tr_{K/{K}_{1}}(\alpha) &=& \left(2x_{1} + x_{4})+(x_{2} + 2x_{3}\right)\frac{1+\sqrt{-m}}{2} ;\\
Tr_{K/ {K}_{2}}(\alpha) &=& \left(2x_{1} + \frac{1-\frac{m}{(m,n)}}{2}x_{2} + x_{3}\right) + \left( \frac{m}{(m,n)}x_{2}+ 2x_{4}\right)\frac{1+\sqrt{n}}{2} ;\\
Tr_{K /{K}_{3}}(\alpha)&=&\left(2x_{1}+ x_{3}+x_{4}\right)+x_{2} \frac{1+\sqrt{\ell}}{2} .
\end{eqnarray*}
Since $Tr_{K/{K}_{3}}(\alpha) \in \mathcal{O}_{{K}_{3}} $, so that
$
2x_{1}+x_{3}+x_{4} \in \mathbb{Z}.
$
Similarly $Tr_{K/ {K}_{1}}(\alpha) \in \mathcal{O}_{{K}_{1}}$ gives that $
2x_{1}+x_{4} \in \mathbb{Z}$.
These together imply  $x_{3} \in \mathbb{Z}$.  

Again $Tr_{K /{K}_{2}}(\alpha) \in \mathcal{O}_{{K}_{2}}$ gives
$
2x_{1} + \left(\frac{1-\frac{m}{(m,n)}}{2}\right) x_{2}+ x_{3} \in \mathbb{Z}.
$
This further implies $2x_{1} \in  \mathbb{Z} $, and hence $ x_{4} \in \mathbb{Z}$ too.
Therefore, 
$$
x_1 = \alpha -x_2 \frac{(1+\sqrt{-m})(1+\sqrt{\ell})}{4}-  x_3 \frac{1+\sqrt{-m}}{2}- x_4 \frac{1+\sqrt{n}}{2},
$$
which implies that $x_{1} \in \mathcal{O}_{K} \cap  \mathbb{Q} = \mathbb{Z}$. 
This completes the proof. 
\end{proof}
\subsection*{Integral bases for complex bi-quadratic fields}\label{ib}
\begin{itemize}
    \item[(A)] Let $K= \mathbb{Q}(\sqrt{-m}, \sqrt{-n})$ where $m$ and $n$ are two distinct, positive, square-free integers, $gcd(m,n)=d$, and $l = \frac{mn}{d^2}$. Then,
    \begin{itemize}
        \item[(i)] \label{ib1} $\mathcal{B} =\{1, \frac{1+\sqrt{-m}}{2}, \frac{1+\sqrt{-n}}{2}, \frac{(1+\sqrt{-m})(1+\sqrt{l})}{4}$\} if $m\equiv n \equiv 3 \pmod4$
    \item[(ii)]$\mathcal{B} = \{1, \frac{1+\sqrt{-m}}{2}, \sqrt{-n}, \frac{\sqrt{-n}+\sqrt{l}}{2}$\} if $m\equiv 3 \pmod4$ and $ n \equiv 2 \pmod 4$
    \item[(iii)]$\mathcal{B} = \{1, \sqrt{-m}, \sqrt{-n}, \frac{\sqrt{-m}+\sqrt{l}}{2}$\} if $m\equiv 2 \pmod4$ and $ n \equiv 1 \pmod 4$
    \item[(iv)]$\mathcal{B} = \{1, \sqrt{-m}, \frac{\sqrt{-m}+\sqrt{-n}}{2}, \frac{1+\sqrt{l}}{2}$\} if $m\equiv n \equiv 1 \pmod4$ 
    \end{itemize}
    \item[(B)]  Let $K= \mathbb{Q}(\sqrt{-m}\sqrt{n})$ where $m$ and $n$ are two distinct, positive, square-free integers, $gcd(m,n)=d$, and $l = \frac{-mn}{d^2}$. Then,
    \begin{itemize}
        \item [(i)] \label{ib2} $\mathcal{B} = \{1, \frac{1+\sqrt{-m}}{2}, \frac{1+\sqrt{n}}{2}, \frac{(1+\sqrt{-m})(1+\sqrt{l})}{4}$\} if $m\equiv 3 \pmod4$, $n \equiv 1 \pmod 4$
    \item[(ii)]$\mathcal{B} = \{1, \frac{1+\sqrt{-m}}{2}, \sqrt{n}, \frac{\sqrt{n}+\sqrt{l}}{2}$\} if $m\equiv 3 \pmod4$ and $ n \equiv 2 \pmod 4$
    \item[(iii)]$\mathcal{B} = \{1, \sqrt{-m}, \sqrt{n}, \frac{\sqrt{-m}+\sqrt{l}}{2}$\} if $m\equiv 2 \pmod 4$ and $ n \equiv 3 \pmod 4$
    \item[(iv)]$\mathcal{B} = \{1, \sqrt{-m}, \frac{\sqrt{-m}+\sqrt{n}}{2}, \frac{1+\sqrt{l}}{2}$\} if $m\equiv 1 \pmod 4$, $n \equiv 3 \pmod 4$ 
    \end{itemize}
    \end{itemize}


\begin{lem}\label{loc}
Let $ \alpha \in 4 \mathcal{O}_F $ in the local field $F$ with the uniformizer $ \pi $. Then there is an integer $ \beta $ such that
\begin{equation*}
1+ \pi\alpha = (1+2\pi\beta)^{2}.
\end{equation*}
\end{lem}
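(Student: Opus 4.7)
The plan is to rearrange the target identity into a quadratic equation for $\beta$ and solve it in $\mathcal{O}_{F}$ by Hensel's lemma. Expanding
\[
(1+2\pi\beta)^{2}=1+4\pi\beta+4\pi^{2}\beta^{2},
\]
the relation $1+\pi\alpha=(1+2\pi\beta)^{2}$ is equivalent to $\alpha=4\beta(1+\pi\beta)$. Since $\alpha\in 4\mathcal{O}_{F}$, I would write $\alpha=4\gamma$ with $\gamma\in\mathcal{O}_{F}$, and the problem reduces to finding $\beta\in\mathcal{O}_{F}$ with
\[
\pi\beta^{2}+\beta-\gamma=0.
\]

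Next I would apply Hensel's lemma to $f(T):=\pi T^{2}+T-\gamma\in\mathcal{O}_{F}[T]$, using the initial approximation $\beta_{0}=\gamma$. One computes $f(\beta_{0})=\pi\gamma^{2}$ and $f'(\beta_{0})=1+2\pi\gamma$. The derivative is a unit, since $1+2\pi\gamma\equiv 1\pmod{\pi}$, while
\[
|f(\beta_{0})|\leq|\pi|<1=|f'(\beta_{0})|^{2}.
\]
The strong form of Hensel's lemma therefore produces a unique $\beta\in\mathcal{O}_{F}$ with $f(\beta)=0$ and $|\beta-\gamma|\leq|\pi|$. Substituting back,
\[
(1+2\pi\beta)^{2}=1+4\pi(\beta+\pi\beta^{2})=1+4\pi\gamma=1+\pi\alpha,
\]
as required. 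An equivalent route, avoiding any explicit invocation of Hensel, is to run the successive approximation $\beta_{n+1}:=\gamma-\pi\beta_{n}^{2}$ starting from $\beta_{0}=0$; one checks $|\beta_{n+1}-\beta_{n}|\leq|\pi|\cdot|\beta_{n}-\beta_{n-1}|$, so the sequence converges in the complete ring $\mathcal{O}_{F}$ to a fixed point of $\beta\mapsto\gamma-\pi\beta^{2}$.

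The point I would flag as the main obstacle is the possibility that the residue characteristic of $F$ equals $2$: then $2$ is not a unit, and one cannot simply extract a square root of $1+\pi\alpha$ by the binomial series, as $1/2$ would have negative valuation. The quadratic-equation reformulation sidesteps this entirely, because $f'(\beta_{0})=1+2\pi\gamma$ has valuation $0$ regardless of the residue characteristic, so the Hensel condition $|f(\beta_{0})|<|f'(\beta_{0})|^{2}$ is verified uniformly. The hypothesis $\alpha\in 4\mathcal{O}_{F}$ is precisely what allows the factor $4$ appearing in $(1+2\pi\beta)^{2}-1=4\pi\beta(1+\pi\beta)$ to be absorbed, yielding a $\gamma\in\mathcal{O}_{F}$ on which the iteration can act.
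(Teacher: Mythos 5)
Your proof is correct. For comparison: the paper does not actually prove this lemma at all --- it simply observes that, after writing $\alpha=4\gamma$ with $\gamma\in\mathcal{O}_F$, the statement $1+4\pi\gamma=(1+2\pi\beta)^2$ is verbatim the Local Square Theorem of O'Meara (63:1 in \emph{Introduction to Quadratic Forms}), and cites it. What you have written is essentially the standard proof of that theorem: reduce to the quadratic $\pi\beta^{2}+\beta-\gamma=0$ and solve it by the strong form of Hensel's lemma, noting that $f'(\gamma)=1+2\pi\gamma$ is a unit even in residue characteristic $2$, so the condition $|f(\gamma)|\leq|\pi|<1=|f'(\gamma)|^{2}$ holds uniformly. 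Your remark that the binomial-series square root fails in the dyadic case, and that the hypothesis $\alpha\in 4\mathcal{O}_F$ is exactly what absorbs the factor $4$ in $(1+2\pi\beta)^{2}-1=4\pi\beta(1+\pi\beta)$, identifies precisely why the theorem is stated with these constants. The fixed-point iteration $\beta_{n+1}=\gamma-\pi\beta_n^{2}$ you sketch as an alternative is also sound (each $\beta_n$ stays in $\mathcal{O}_F$ and $|\beta_{n+1}-\beta_n|\leq|\pi|\,|\beta_n-\beta_{n-1}|$ since $|\beta_n+\beta_{n-1}|\leq 1$), and gives a citation-free argument. In short: you supply the proof that the paper outsources to the reference; the mathematical content is the same, but your version is self-contained.
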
 
\begin{proof}
The proof directly follows from 'Local square Theorem' {\cite[See p. 159]{OM}}.
\end{proof}
\section{Proof of main theorems}
\begin{proof}[\bf Proof of Theorem \ref{thm1}]
Let $F$ be a number field in which $-1$ can be written as sum of integral squares. Let $\mathcal{O}_{F}$ be the ring of integers of $F$ and $\mathcal{R}_F$ be the set of all elements in $\mathcal{O}_F$ which are sums of squares in $\mathcal{O}_F$ and 
thus $\mathcal{R}_{F}$ is a sub-ring of $\mathcal{O}_{F}$.
By definition, $s(\mathcal{O}_{F}) \leq   p(\mathcal{O}_{F})$.
 
As $\alpha \in \mathcal{O}_{F}$ so do $ -\alpha$.
Hence there exists $\beta_{1},\beta_{2},\cdots,\beta_{t} \in \mathcal{O}_{F}$ such that
\begin{equation*}
-\alpha = \beta_{1}^{2}+\beta_{2}^{2}+...+\beta_{t}^{2}.
\end{equation*}
Then
\begin{eqnarray*}
\alpha &= &\left( \sum_{1\leq i \leq t}\beta_{i} + \sum_{1\leq i \leq j \leq t} \beta_{i}\beta_{j} +1\right)^{2} - \left( \sum_{1\leq i \leq t}\beta_{i} + \sum_{1\leq i \leq j \leq t} \beta_{i}\beta_{j}\right)^{2}\\
& -& \left(\sum_{1\leq i \leq t}\beta_{i}  + 1\right)^{2}.
\end{eqnarray*}
We write,
$\gamma_1=\sum\limits_{1\leq i \leq t}\beta_{i} + \sum\limits_{1\leq i \leq j \leq t} \beta_{i}\beta_{j} +1$, $\gamma_2=\sum\limits_{1\leq i \leq t}\beta_{i} + \sum\limits_{1\leq i \leq j \leq t} \beta_{i}\beta_{j}$ and $\gamma_3=\sum\limits_{1\leq i \leq t}\beta_{i}  + 1$. Then
$\gamma_{i} (i=1,2,3) 
\in \mathcal{O}_{F}$ which satisfy
\begin{equation*}
\alpha = \gamma_{1}^{2}+(-1)(\gamma_{2}^{2}+\gamma_{3}^{2}).
\end{equation*}
If $s(\mathcal{O}_{F}) = 2m$ for some positive integer $m$, then
\begin{eqnarray*}
\alpha &= &\gamma_{1}^{2}+\left(\sum_{i=1}^{2m}\epsilon_{i}^{2}\right) (\gamma_{2}^{2}+\gamma_{3}^{2})\\
&=&\gamma_{1}^{2} + \sum_{i=1}^{2m}\left( (\epsilon_{i}\gamma_{2}+\epsilon_{i+1}\gamma_{3})^{2} + (\epsilon_{i}\gamma_{3} - \epsilon_{i+1}\gamma_{2})^{2}\right),
\end{eqnarray*}
with $i$ varies over odd integers.
This shows that $p(\mathcal{O}_{F}) \leq 2m+1.$ 

Analogously, if $s(\mathcal{O}_{F}) = 2m + 1$ for some positive integer $m$, then
\begin{eqnarray*}
\alpha &=& \gamma_{1}^{2}+\left(\sum_{i=1}^{2m}\epsilon_{i}^{2}\right)(\gamma_{2}^{2}+\gamma_{3}^{2}) + \epsilon_{2m+1}^2 (\gamma_{2}^{2}+\gamma_{3}^{2})\\
&=&\gamma_{1}^{2} + \sum_{i=1}^{2m}\left(\epsilon_{i}\gamma_{2}+\epsilon_{i+1}\gamma_{3})^{2} + (\epsilon_{i}\gamma_{3} - \epsilon_{i+1}\gamma_{2})^{2}\right) + \epsilon_{2m+1}^2(\gamma_{2}^{2}+\gamma_{3}^{2}),
\end{eqnarray*}
with $i$ varies over odd integers. This shows that $p(\mathcal{O}_{F})\leq 2m+3 $.\\
This completes the proof.
\end{proof}
\begin{proof}[\bf Proof of Corollary \ref{cor1}]
A CM field $F$ is a complex quadratic extension over totally real number field. Therefore it has a complex quadratic sub-field, say, $F_{1}$. From Theorem \ref{thmM} we get $-1$ can be written as sum of $2, 3$ or $4$ integer squares in $F_{1}$. Then it can also be written as sum of at most four integer squares in $F$. Then the result follows immediately from Theorem \ref{thm1}.
\end{proof}
In fact, this result is true for any number field which has at least one complex quadratic sub-field.
\begin{proof}[\bf Proof of Theorem \ref{thm2}]
By definition, $\mathcal{R}_{K} \subseteq \mathcal{O}_{K}$.
Therefore it is sufficient to show that $\mathcal{O}_{K} \subseteq \mathcal{R}_{K}$ to complete the proof.
By the four square theorem, $\mathbb{N}\subseteq \mathcal{R}_{K}$. Also $-1\in \mathcal{R}_K$ by Theorem \ref{thmM} and hence $\mathbb{Z}\subseteq \mathcal{R}_{K}$. 
Since $\mathcal{B}$ (in Proposition \ref{lem1}) is an integral basis for $\mathcal{O}_K$, any $\alpha\in \mathcal{O}_K$ can be expressed as
\begin{equation}\label{eqal}
\alpha = x_1 + x_2 \left(\frac{(1+\sqrt{-m})(1+\sqrt{ \ell})}{4}\right)+x_{3} \left(\frac{1+\sqrt{-m}}{2}\right)+ x_{4} \left(\frac{1+\sqrt{n}}{2}\right),
\end{equation}
where $ x_{i}
\in \mathbb{Z}, i= 1, \cdots, 4$ and $\ell= \frac{mn}{\gcd(m,n)^{2}}$. 

Now
\begin{align}\label{eqx}
\begin{cases}
 \dfrac{1+\sqrt{-m}}{2} = \left(\dfrac{1+\sqrt{-m}}{2}\right)^{2} + \dfrac{m+1}{4} \\
\dfrac{1+\sqrt{n}}{2} = \left(\dfrac{1+\sqrt{n}}{2}\right)^{2} -\dfrac{n-1}{4} \\
 \dfrac{1+\sqrt{\ell}}{2} = \left(\dfrac{1+\sqrt{\ell}}{2}\right)^{2} - \dfrac{\ell-1}{4} .
 \end{cases}
\end{align}
Since $m \equiv 3 \pmod 4 $ and $ n, \ell \equiv 1 \pmod 4$, one has  $\frac{m+1}{4}, \frac{n-1}{4}, \frac{\ell-1}{4} \in \mathbb{Z}$, and thus $\frac{m+1}{4}, \frac{n-1}{4}, \frac{\ell-1}{4}\in\mathcal{R}_K$. Therefore using \eqref{eqx}, 
$$
 \frac{(1+ \sqrt{-m})(1+ \sqrt{\ell})}{4}, \frac{1+\sqrt{-m}}{2},\frac{1+\sqrt{n}}{2} \in \mathcal{R}_{K}.
$$
 The proof is now completed by \eqref{eqal}.
 \end{proof}
 
 Now, Using Theorem \ref{thm1} and Theorem \ref{thm2} we are ready to proof our main theorem.

 \begin{proof}[\bf Proof of Theorem \ref{thm3}]

We first prove that every element of $4\mathcal{O}_K$ for any complex bi-quadratic field $K$ can be written as sum of integral squares.\\
To prove this we first prove that for each complex bi-quadratic field $K$ there exist a positive integer $s_{0}$ such that for any $s \geq s_0$ every element of $4s\mathcal{O}_{K}$ can be written as sum of integral squares. By the result of Zang and Ji \cite{ZJ} and by Theorem \ref{thm2} we know that, for complex quadratic fields $\mathbb{Q}(\sqrt{-m}, \sqrt{-n})$, where $m \equiv n \equiv 3 \pmod 4 $ and for $\mathbb{Q}(\sqrt{-m}, \sqrt{n})$, where $ m \equiv 3 \pmod 4 $, $n \equiv 1 \pmod 4 $ every algebraic integer can be written as sums of integral squares. For other cases we can see from Theorem \ref{thmc} that every algebraic integer $\alpha \in \mathcal{O}_{K}$ can be written as,
\begin{equation}\label{eqn2}
    \alpha = a + b \sqrt{m} + c\sqrt{n} + d \sqrt{l}
\end{equation}
where, $m$ and $n$ are distinct, square-free integers(not always positive),  $l= \frac{mn}{gcd(m,n)^2}$ and $a,b,c,d$ are either integers or half-integers.\\

Then $2a,2b,2c,2d$ are always integers. From Theorem \ref{thmM} and by Lagrange's four square theorem we get every element of $\mathbb{Z}$ can be written as sum of integral squares in $K$. Therefore, for any half-integer $a$, $2a \in \mathcal{R}_{K}$. Again, by Theorem \ref{thmb} we know that $2\sqrt{-r}$ can be written as sum of integral squares in the sub-field $\mathbb{Q}(\sqrt{-r}$ of $K$, for any positive, square-free integer r, and hence in $K$. We also know that every complex bi-quadratic field $K$ has exactly one real quadratic sub-field. Let $K_{1} = \mathbb{Q}(\sqrt{D})$ be that sub-field. Then, by Theorem \ref{thme}, for $s \geq \ceil[big]{\frac{D}{2}}$, $\kappa s(\sqrt{D})$ can be written as sum of integral squares. Since $\kappa = 1$ or $2$, this implies  $s \sqrt{D}$ or $2s \sqrt{D}$ can be written as sum of integral squares.  Also, if $s(\sqrt{D})$ can be written as sum of squares in $\mathcal{O}_{K_1}$, $2(s(\sqrt{D}))$ also have the same property ( as $\mathbb{Z} \subseteq \mathcal{R}_{K}$ ) in $\mathcal{O}_{K}$.\\
Combining all these facts we get,\\
\begin{equation}\label{eqn3}
4as\sqrt{-r} = (2a)(s)(2\sqrt{-r})
\end{equation}
where each of $2a$, $s$ and $2\sqrt{-r}$ can be written as sum of squares in $\mathcal{O}_{K}$. \\
Similarly, we can say,
\begin{equation}\label{eqn4}
4as\sqrt{D} = (2a)(2s\sqrt{D})
\end{equation}
where each of $(2a)$ and $(2s\sqrt{D})$ can be written as sum of squares in $\mathcal{O}_{K}$. \\
Using equation \ref{eqn3} and \ref{eqn4} in equation \ref{eqn2} the claim follows.\\
Again, we can write
\begin{equation*}
 4\sqrt{D} = (1+\sqrt{D})^2 - (1- \sqrt{D})^2
\end{equation*}
Since $-1$ can be written as sum of integral squares in $K$, therefore $4\sqrt{D}$ can also be written as the same in $K$. This implies,
\begin{equation} \label{eqn6}
    8a\sqrt{D} = (2a)(4\sqrt{D})
\end{equation}
can also be written as the same.
Again, since $4a$ and $2\sqrt{-r}$ can be written as sum of integral squares in $\mathcal{O}_{K}$ we can say,
\begin{equation}\label{eqn7}
    8a \sqrt{-r}= (4a)(2 \sqrt{-r})
\end{equation}
can be written as the same in $\mathcal{O}_{K}$\\ 
Using equation \ref{eqn6} and \ref{eqn7} in equation \ref{eqn3} we get that for any positive integer $t$ every element of $8t\mathcal{O}_{K}$ can be written as sum of integral squares.
Now, since $D$ is a square-free positive integer, it must be of the form $D \equiv 2 \pmod 4 $. This implies $s_0$ is an odd integer.\\
Let $s_0 = 2n_0+1 $ for some positive integer $n_0$ and for some complex bi-quadratic field $K_0$. Then for any $ \alpha \in \mathcal{O}_{K_{0}} $ we can write 
\begin{equation}\label{eqn8}
    4(2n_0+1)\alpha = \sum\limits_{1\leq i \leq t_0}\alpha_{i}^2
\end{equation}
And again,
\begin{equation}\label{eqn9}
    8n_0 \alpha = \sum\limits_{1\leq i \leq t_1}\beta_{i}^2
\end{equation}
Subtracting equation \ref{eqn9} from \ref{eqn8} we get 
\begin{equation}
    4\alpha = \sum\limits_{1\leq i \leq t_0}\alpha_{i}^2 - \sum\limits_{1\leq i \leq t_1}\beta_{i}^2
\end{equation}
Since $-1$ can be written as sum of integral squares in $K_0$ and since $\alpha$ is an arbitrary  element of $K_0$ we can say that every element of $4\mathcal{O}_{K_{0}}$ can be written as sum of integral squares. Also, since this is true for each $K_0$, the claim follows for all complex bi-quadratic fields. 
Now, giving exactly similar argument in the proof of Theorem \ref{thm1} we conclude that every element of $4\mathcal{O}_{K}$ can be written as sum of five integral squares. This completes the theorem. 
\end{proof}

 \begin{proof}[\bf Proof of Corollary \ref{cor2} ]
 In \cite{ZJ}, Zang and Ji proved that, for $K=\mathbb{Q} (\sqrt{-m}, \sqrt{-n})$, where $ m\equiv n \equiv 3 \pmod 4 $, this result is true.
  In exactly similar way, for $K= \mathbb{Q}(\sqrt{-m}, \sqrt{n})$, where $m \equiv 3 \pmod 4$ and $n \equiv 1 \pmod4 $ we get the same result.
  We are giving the details for part (ii).\\
  Theorem \ref{thm2} gives that $ \mathcal{R}_{K} = \mathcal{O}_{K}$. 
Assume that $s( \mathcal{O}_{K}) = 2$. Then by Theorem \ref{thm1}, we  conclude that every element of $4\mathcal{O}_{K} $ can be expressed as a sum of three squares. Thus it remains only to show that  there exists an element in $4\mathcal{O}_{K}$ which is not a sum of two integral squares.

Let $L = K (\sqrt{-1})$.  
Assume that $\mathcal{P}$ is a prime ideal above $2$ in $K$ and $ \mathcal{Q }$ is a prime ideal above $\mathcal{ P}$ in $L$. Then $\mathcal{ P}$ is totally ramified in $L$. Let $ L_{\mathcal{Q}}$ and $K_{\mathcal{P}} $ denote the completions of $L$ and $K$ at $\mathcal{Q}$ and $\mathcal{P}$ respectively. Then [$L_{\mathcal{Q}}: K_{\mathcal{P}} ] = 2$ and by the local class field theory, ${K}_{\mathcal{P}}^{*}/{N}(L_{\mathcal{Q}}^{*}) \cong  \text{Gal}( {L}_{\mathcal{Q}} / {K}_{\mathcal{P}} )$. Thus [${K}_{\mathcal{P}}^{*} : {N}(L_{\mathcal{Q}}^{*} ]= 2$. Assume that every element in $ 4\mathcal{O}_{{K}}$ is a sum of two integral squares in ${K}$. However, ${K}$ is dense in ${K}_{\mathcal{P}} $,  thus by Lemma \ref{loc}, we get that every element of ${K}_{\mathcal{P}} $ is a sum of two squares in ${K}_{\mathcal{P}} $, that is , the norm of ${L}_{\mathcal{Q}}^{*} \rightarrow {K}_{\mathcal{P}}^{*}$ is surjective, which is a contradiction. Therefore $p(4\mathcal{O}_{{K}})=3$.
\end{proof}

\section{Examples}
First we are giving a table of fundamental units $\epsilon_D$ of real quadratic field $\mathbb{Q}(\sqrt{D})$ for which $\mathcal{N}(\epsilon_D) = 1$

\begin{center}
\begin{tabular}{ |c|c| } 
 \hline
 $D$ & $\epsilon_{D}$ \\ 
 \hline
 $3$ & $2+\sqrt{3}$ \\ 
 $6$ & $5+2\sqrt{6}$\\ 
 $7$ & $8+3\sqrt{7}$\\
 $11$ & $10+3\sqrt{11}$\\
 $14$ & $15+4\sqrt{14}$\\
 $15$ & $4+\sqrt{15}$\\
 $21$ & $\frac{1}{2}(5+\sqrt{21})$\\
 \hline
\end{tabular}
\end{center}
From the table and using Theorem \ref{thmM} we can say, if $K= \mathbb{Q}(\sqrt{-m},  \sqrt{-n})$, or $\mathbb{Q}(\sqrt{-m}, \sqrt{-n})$ where $m=3,6,11,14$ or $21$ and $n$ is any positive, square-free integer, $s(\mathcal{O}_{K})=2$\\
And if $K= \mathbb{Q}(\sqrt{-m}, \sqrt{-n})$, where $m=7,15$ and $n \neq m $ is any positive, square-free integer, then also $s(\mathcal{O}_{K})=2$.\\
In general, if a bi-quadratic field $K$ has a sub-field $\mathbb{Q}(\sqrt{-D})$ which has an odd class number and $ D= p,2p, p_{1},p_{2}$ with $p\equiv p_{1} \equiv p_{2} \equiv 3 \pmod4 $ then from \cite{PJ} we get $s(\mathcal{O}_{K})=2$.\\
 
\section*{Acknowledgments}
The author expresses her gratitude to her adviser  Prof.  Kalyan Chakraborty  for going through the manuscript and revising it thoroughly. The author is also indebted  to Dr. Azizul Hoque for introducing her into this beautiful area of research, and for many fruitful comments and valuable suggestions.

\end{document}